\newtheorem{thm}{Theorem}[section]
\newtheorem{lem}{Lemma}[section]
\def\BBox{\kern  -0.2cm\hbox{\vrule width 0.2cm height 0.2cm}}
\renewcommand{\Gamma}{\varGamma}
\renewcommand{\epsilon}{\varepsilon}
\renewcommand{\leq}{\leqslant}
\renewcommand{\geq}{\geqslant}
\newcommand{\E}{\mathbb{E}^3}
\begin{document}


\title{The Regular Gr\"unbaum Polyhedron of Genus $5$}

\author{
G\'abor G\'evay \\
Bolyai Institute\\ 
University of Szeged\\
H-6720 Szeged, Hungary\\
{\it gevay@math.u-szeged.hu}\\
\and 
Egon Schulte\thanks{Supported by NSF-grant DMS--0856675}\\
Department of Mathematics\\
Northeastern University\\
Boston, MA, USA, 02115\\
{\it schulte@neu.edu}\\
\and \\[.06in]
J\"org M. Wills\\
Mathematics Institute\\
University of Siegen\\
D-57068 Siegen, Germany\\
{\it wills@mathematik.uni-siegen.de}\\}
\bigskip
\date{ \today }
\maketitle

\begin{abstract}
\noindent
We discuss a polyhedral embedding of the classical Fricke-Klein regular map of genus~$5$ in ordinary space $\mathbb{E}^3$. This polyhedron  was originally discovered by Gr\"unbaum in 1999, but was recently rediscovered by Brehm and Wills. We establish isomorphism of the {\it Gr\"unbaum polyhedron\/} with the Fricke-Klein map, and confirm its combinatorial regularity. The Gr\"unbaum polyhedron is among the few currently known geometrically vertex-transitive polyhedra of genus $g\geq 2$, and is conjectured to be the only vertex-transitive polyhedron in this genus range that is also combinatorially regular. We also contribute a new vertex-transitive polyhedron, of genus $11$, to this list, as the 7th known example. In addition we show that there are only finitely many vertex-transitive polyhedra in the entire genus range $g\geq 2$.
\vskip.1in
\medskip
\noindent
{\it Key Words: Platonic solids; regular polyhedra; regular maps; Riemann surfaces; polyhedral embedding; equivelar polyhedra}
\medskip

\noindent
{\it AMS Subject Classification (2000):  Primary: 51M20.  Secondary: 52B15.}
\end{abstract}

\section{Introduction}
\label{intro}

Combinatorially regular polyhedra in Euclidean $3$-space $\mathbb{E}^3$ are polyhedral embeddings in $\mathbb{E}^3$ of regular maps (cell-complexes) on orientable compact closed surfaces. Regular maps on surfaces have been studied from combinatorial, topological, algebraic and geometric viewpoints for well over 100 years (see Coxeter \& Moser~\cite{cm}). Combinatorially regular polyhedra and their underlying topological maps can generally be viewed as higher-genus analogues of the Platonic polyhedra; the latter are precisely the regular maps on the $2$-sphere, of genus $g=0$. For small genus $g$, with $2\leq g\leq 6$, only eight regular maps are known to admit polyhedral embeddings with convex faces, and it is conjectured that no others occur (see \cite{swisym}). This list of eight includes famous maps of Klein, Fricke, Dyck, and Coxeter, including the classical Fricke-Klein map of genus~$5$ from 1890 (see~\cite{bok,bre,c1,cm,d2,acop,kl1,kf,mcco,swifk,swics,schw}). Some remarkable infinite series of combinatorially regular convex-faced polyhedra have also been described in the literature (see \cite{jr,msw,msw1,msw2,rz}). 

In the present paper we discuss the {\em Gr\"unbaum polyhedron\/}, a polyhedral embedding in $\mathbb{E}^3$ of the Fricke-Klein map found by Gr\"unbaum~\cite{acop} in~1999. This polyhedron has a rather interesting history of discovery. A first polyhedral realization of the Fricke-Klein map, with high symmetry but with self-intersections (thus not an embedding), was given in the 1987 paper~\cite{swikp}. Three years earlier, Gr\"unbaum and Shephard~\cite{gs} had discovered five geometrically vertex-transitive convex-faced polyhedra of genus $g\geq 2$, including an equivelar (locally regular) polyhedron with octahedral rotation symmetry and with an underlying map that shared significant combinatorial data (type, genus, number of vertices) with the Fricke-Klein map but was not isomorphic to it. Then, in 1999, Gr\"unbaum~\cite[p.\ 41, Fig.~19]{acop} described a new vertex-transitive polyhedron with octahedral rotation symmetry closely related to the previously constructed equivelar example, and announced this to be a realization of the Fricke-Klein map further to be investigated in a forthcoming article. In 2010, Brehm and Wills rediscovered the Gr\"unbaum polyhedron independently, initially planning to write a joint article with Gr\"unbaum that was supposed to contain proofs for the regularity of the polyhedron and for the isomorphism with the Fricke-Klein map. Unfortunately, neither the Gr\"unbaum article nor the Brehm-Gr\"unbaum-Wills article was ever written. However, we believe that the Gr\"unbaum polyhedron and its relationship with the Fricke-Klein map are significant enough to merit separate publication. 

In this paper we describe the geometric construction of the Gr\"unbaum polyhedron; provide integer coordinates for the vertices; supply the missing proof of isomorphism with the Fricke-Klein map, and hence of combinatorial regularity, based on the planar diagram for the map shown in \cite{swikp}; and establish that its octahedral rotation symmetry is maximum possible. We also discuss the currently known geometrically vertex-transitive polyhedra of genus $g\geq 2$, all of which occur in enantiomorphic (chiral) pairs; contribute a new vertex-transitive polyhedron, of genus $11$, to this list, as the 7th known example; and establish that there are only finitely many vertex-transitive polyhedra in the entire genus range $g\geq 2$. The Gr\"unbaum polyhedron is conjectured to be the only vertex-transitive polyhedron of genus $g\geq 2$ that is also combinatorially regular. We also present a number of attractive computer-generated pictures. 
 
\section{Basic notions}
\label{basno}

In this paper, a {\em polyhedron\/} $P$ is a compact closed surface in Euclidean $3$-space $\E$ made up of finitely many {\em convex\/} polygons, the {\em faces\/} of $P$, such that any two distinct polygons intersect, if at all, in a common vertex or a common edge (see Brehm \& Wills~\cite{brwil}, Brehm \& Schulte~\cite{brs}). Thus $P$ is free of self-intersections. The vertices and edges of the faces of $P$ are called the {\em vertices\/} and {\em edges\/} of $P$, respectively. We require that no  two {\em adjacent\/} faces (with a common edge) lie in the same plane. For notational convenience we usually identify $P$ with the map on the underlying orientable surface, or with the abstract polyhedron consisting of the vertices, edges, and faces, partially ordered by inclusion (see Coxeter \& Moser~\cite{cm}, McMullen \& Schulte~\cite{arp}). 

A polyhedron $P$ is said to be {\em combinatorially regular\/} if its combinatorial automorphism group $\Gamma(P)$ is transitive on the flags (incident triples consisting of a vertex, an edge, and a face) of $P$. A combinatorially regular polyhedron is a polyhedral embedding in $\E$ of a finite (regular) map on an orientable surface (see \cite{cm}). A map (or polyhedron, resp.) is called {\em equivelar\/}, of ({\em Schl\"afli\/}) {\em type\/} $\{p,q\}$ (with finite $p,q\geq 3$), if the faces are $p$-gons such that $q$ meet at each vertex.  Regular maps (or combinatorially regular polyhedra, resp.) are equivelar. For an equivelar map $P$ of type $\{p,q\}$ with $f_0$ vertices, $f_1$ edges, and $f_2$ faces on an orientable surface of genus $g$, the number of flags $f$ (that is, the order of $\Gamma(P)$ when $P$ is regular) and the genus are linked via
\begin{equation}
\label{chimap}
2-2g = f_{0} - f_{1} + f_{2} = \frac{f}{2}\,(\frac{1}{p} + \frac{1}{q} - \frac{1}{2}).
\end{equation}

It is well-known that the Platonic solids are the only regular maps on the $2$-sphere (with $g=0$), and that there are infinitely many regular maps on the $2$-torus (with $g=1$), each of type $\{3,6\}$, $\{6,3\}$ or $\{4,4\}$. Each surface of genus $g\geq 2$ supports at most a finite number of regular maps, but there is an infinite number of surfaces that do not admit a regular map with a simple edge graph at all (see Conder, Siran \& Tucker~\cite{cst}, Breda d'Azevedo, Nedela \&~Siran~\cite{bns}). We are particularly interested in surfaces of small genus. The Klein map $\{3,7\}_8$ and Dyck map $\{3,8\}_6$ are the most prominent examples for genus $3$. The Fricke-Klein map of type $\{3,8\}$ and genus $5$ is a $2$-fold covering of Dyck's map; the former is the map associated with the Gr\"unbaum polyhedron and is shown in Figure~\ref{FKkarte}. See Conder~\cite{con} for a complete census of regular maps on orientable surfaces of genus up to~101. 

\begin{figure}[htbp]
\centering
    \includegraphics[width=.5\textwidth]{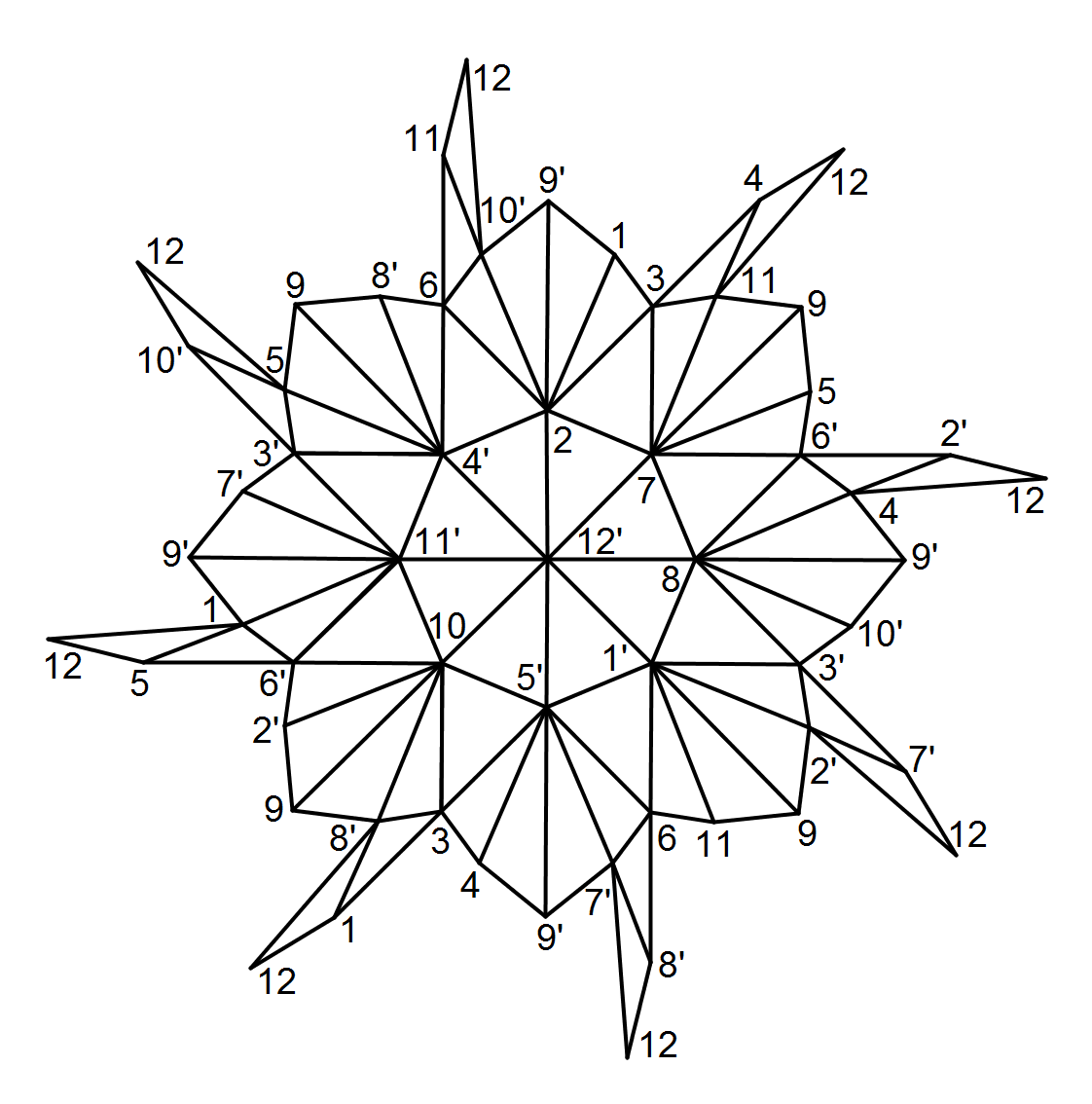}
    \medskip
    \caption{The Fricke-Klein map of type $\{3,8\}$ and genus $5$.} 
    \label{FKkarte}               
\end{figure}

For a regular map $P$ of type $\{p,q\}$, the group $\Gamma(P)$ is generated by ``combinatorial reflections" $\rho_0,\rho_1,\rho_2$ in the sides of a fundamental triangle that satisfy (at least) the {\em Coxeter relations\/} 
\begin{equation}
\label{coxrel}
\rho_{0}^2 = \rho_{1}^{2} = \rho_{2}^{2} = (\rho_0\rho_1)^{p} = (\rho_1\rho_2)^{q} = (\rho_0\rho_2)^{2} = 1. 
\end{equation}
For a complete presentation of $\Gamma(P)$ extra relations are needed precisely when $g>0$. For example, the addition of the {\em Petrie relation\/} 
\begin{equation}
\label{exrel1}
(\rho_0\rho_1\rho_2)^{r} = 1, 
\end{equation} 
with $r=8$ or $6$, to the relations in (\ref{coxrel}) suffices for a complete presentation of the automorphism groups of the Klein map and the Dyck map, respectively. The Coxeter element $\rho_0\rho_1\rho_2$ of $\Gamma(P)$ occurring in (\ref{exrel1}) shifts a certain Petrie polygon of $P$ one step along itself, and hence has period $r$ if the Petrie polygon has length $r$. Recall here that a {\em Petrie polygon\/} of a regular map is a zigzag along the edges such that every two, but no three, successive edges lie in a common face. The {\em Petrie dual\/} of $P$ is a new regular map (on a generally different surface), with the same edge graph as $P$ but the faces given by the Petrie polygons of $P$.  

The relations in (\ref{coxrel}) are implying that every regular map of type $\{p,q\}$ is a quotient of the corresponding regular tessellation $\{p,q\}$ of the $2$-sphere (if $g=0$), the Euclidean plane (if $g=1$), or the hyperbolic plane (if $g\geq 2$). If a map is the quotient of the regular tessellation $\{p,q\}$ obtained by identifying any two vertices that are separated by $r$ steps along a Petrie polygon, for a specified value of $r$, then the map is denoted $\{p,q\}_r$. The Klein map and the Dyck map are examples of this kind.

The geometric symmetry group $G(P)$ of a polyhedron $P$ can be viewed as a subgroup of $\Gamma(P)$. Generally $G(P)$ is small compared with $\Gamma(P)$. Naturally, when searching for polyhedral realizations of a given regular map we are most interested in those that exhibit maximum possible geometric symmetry.

\section{The Gr\"unbaum polyhedron}
\label{grupo}

The {\em Gr\"unbaum polyhedron\/} is the polyhedral embedding of the Fricke-Klein map of type $\{3,8\}$ and genus $5$ discovered by Gr\"unbaum~\cite{acop}; as mentioned earlier, the polyhedron was rediscovered by Brehm and Wills (in unpublished work).

\medskip
\noindent
{\bf Construction}
\smallskip

The Gr\"unbaum polyhedron and the Fricke-Klein map each have $24$ vertices, $96$ edges, and $64$ faces, as well as an automorphism group of order $384$ (see \cite{kf}). The Petrie polygons all have length $12$; thus the Fricke-Klein map is a finite quotient of the infinite regular map $\{3,8\}_{12}$ (see \cite[p.\,399]{arp}). On the other hand, the Fricke-Klein map doubly covers Dyck's map $\{3,8\}_6$ of genus~$3$ via a covering mapping determined by a central involution in the automorphism group of the Fricke-Klein map; this covering relationship is particularly nicely revealed on the polyhedral realizations with self-intersections for these two maps described in \cite{swidy,swikp}.

\begin{figure}[h!] 
\centering
    \mbox{\hskip -3pt
    \subfigure[The full polyhedron.]{
    \includegraphics[width=.4\textwidth]{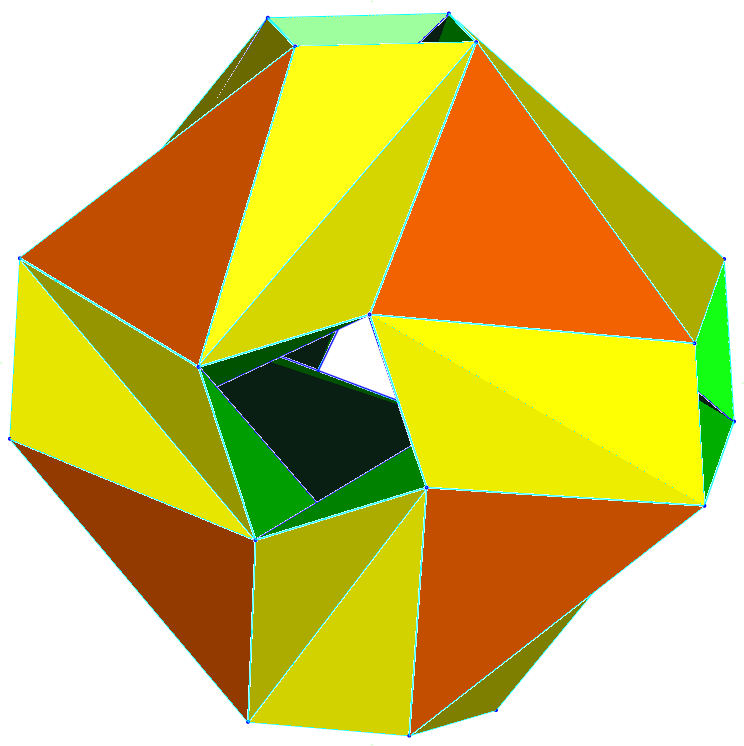}}} 
    \vskip.2in
    \mbox{\hskip -3pt
    \subfigure[The outer shell.]{
    \includegraphics[width=.4\textwidth]{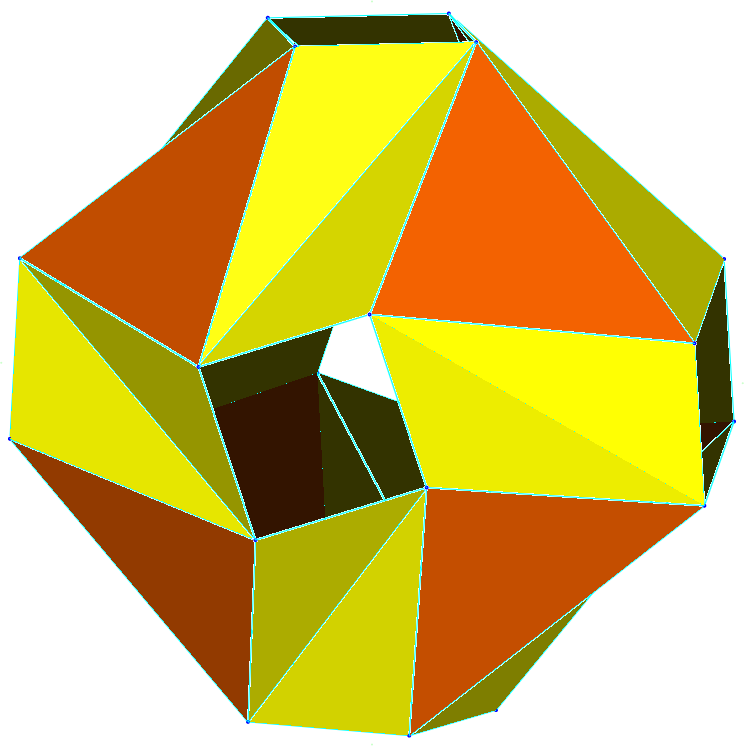}} 
    \subfigure[The inner shell.]{
    \includegraphics[width=.4\textwidth]{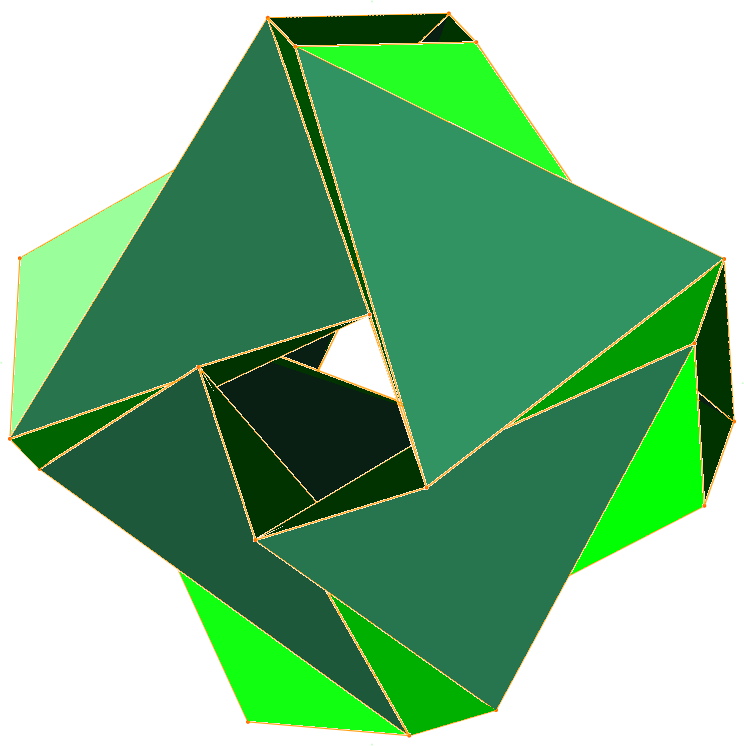}}}
    \caption{Gr\"unbaum's regular polyhedron of genus 5.}              
    \label{GrunbaumPolyhedron}
\end{figure}

The Gr\"unbaum polyhedron $P$ (say) shown in Figure~\ref{GrunbaumPolyhedron} has octahedral rotation symmetry and can best be described in terms of the geometry of the Archimedean snub cube. The $24$ vertices of $P$ are those of the snub cube, but only $8$ triangle faces of the snub cube are retained as faces of $P$. The polyhedron consists of an outer shell $P^{o}$ and an inner shell $P^i$ connected precisely at the square ``holes" formed by the square face boundaries of the snub cube (but nowhere else). The entire polyhedron $P$ can be pieced together from the orbits of four suitably chosen triangles under the octahedral rotation group, a pair of adjacent triangles taken from each shell. The triangles in these orbits then are the faces of $P$. 

Brehm and Wills showed that the underlying snub cube can be chosen in such a way that the vertices of $P$ have small integer coordinates. The smallest possible integer coordinates are obtained when the four particular triangles are chosen as follows. (Computer generated images of the polyhedral surfaces with smaller integer coordinates show that these surfaces have self-intersections.) The two adjacent triangles for the outer shell have vertex sets 
\[ \{(1,2,6),(2,6,1),(6,1,2)\},\;\; \{(1,2,6),(2,6,1),(-2,1,6)\}.\]
Their images under the standard octahedral rotation group (generated by 4-fold rotations about the coordinate axes and 3-fold
rotations about the main space diagonals) comprise 8~regular triangles and 24 non-regular triangles forming the outer
shell $P^o$. The inner shell $P^i$ similarly consists of 8~regular triangles and 24 non-regular triangles obtained under the standard octahedral rotation group from the two adjacent triangles for $P^i$ with vertex sets 
\[ \{(2,-1,6),(-1,6,2),(6,2,-1)\},\;\; \{(2,-1,6),(-1,6,2),(-2,6,-1)\}.\]
It is immediately clear by construction that the symmetry group $G(P)$ is vertex-transitive.

\medskip
\noindent
{\bf Combinatorial regularity}
\smallskip

In order to prove isomorphism of the Gr\"unbaum polyhedron $P$ with the Fricke-Klein map, and thus establish combinatorial regularity for $P$, we employ the planar diagram for the Fricke-Klein map shown in Figure~\ref{FKkarte} (and taken from \cite[Fig. 12]{swikp}). To this end, we begin by producing in Figure~\ref{diagrams} two diagrams (reminiscent of Schlegel diagrams) which accurately represent the combinatorics of the face decompositions of the inner shell and outer shell, respectively (see Figure~\ref{GrunbaumPolyhedron}). Each diagram consists of a large square, the {\em outer frame\/}, and five smaller squares inside; its $24$ vertices are labeled $1,\ldots,12$ and $1',\ldots,12'$ as indicated (using the same labeling for both diagrams). The squares represent the six square holes of $P$ determined by the square faces of the snub cube (the squares themselves are not faces of $P$). On each diagram, the space between the inner squares and the outer frame is (topologically) triangulated in exactly the same way in which $P^o$ or $P^i$, respectively, is triangulated by the triangular faces of $P$ lying in $P^o$ or $P^i$. In our mind, we then can obtain an accurate picture of the combinatorics of the entire polyhedron $P$ by amalgamating the two diagrams along corresponding square holes. With this in place, it is now straightforward to verify that the vertex-labeled topological triangles on the two diagrams in Figure~\ref{diagrams} match precisely the faces on the planar diagram for the Fricke-Klein map shown in Figure~\ref{FKkarte}. Thus the Gr\"unbaum polyhedron is a polyhedral embedding of the Fricke-Klein map, and in particular is combinatorially regular since the map is regular.

\begin{figure}[htbp] 
\centering
    \mbox{
    \hskip -3pt
    \subfigure[The diagram for the outer shell.]{
    \includegraphics[width=.45\textwidth]{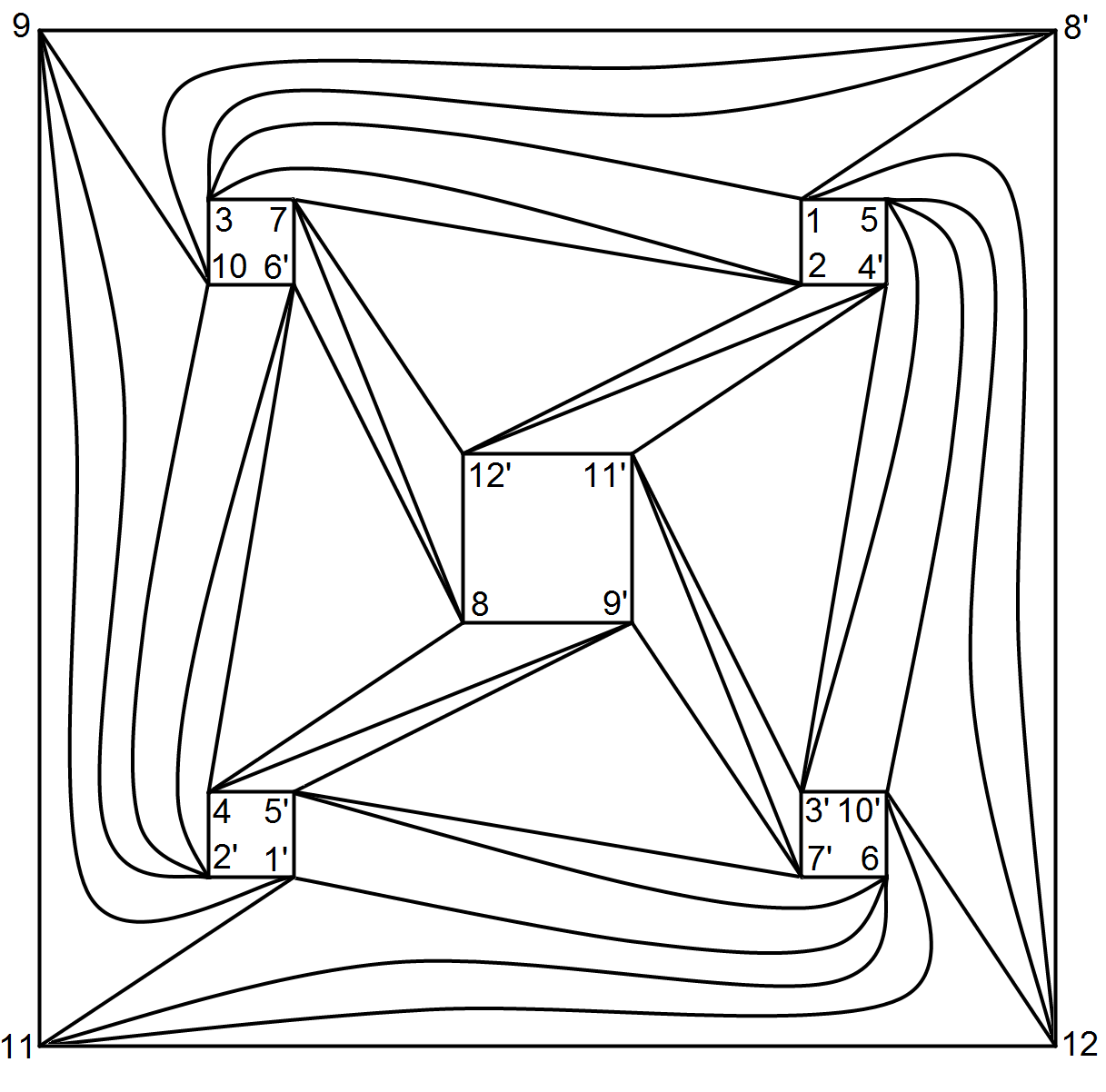}}} 
    \mbox{
    \subfigure[The diagram for the inner shell.]{
    \includegraphics[width=.45\textwidth]{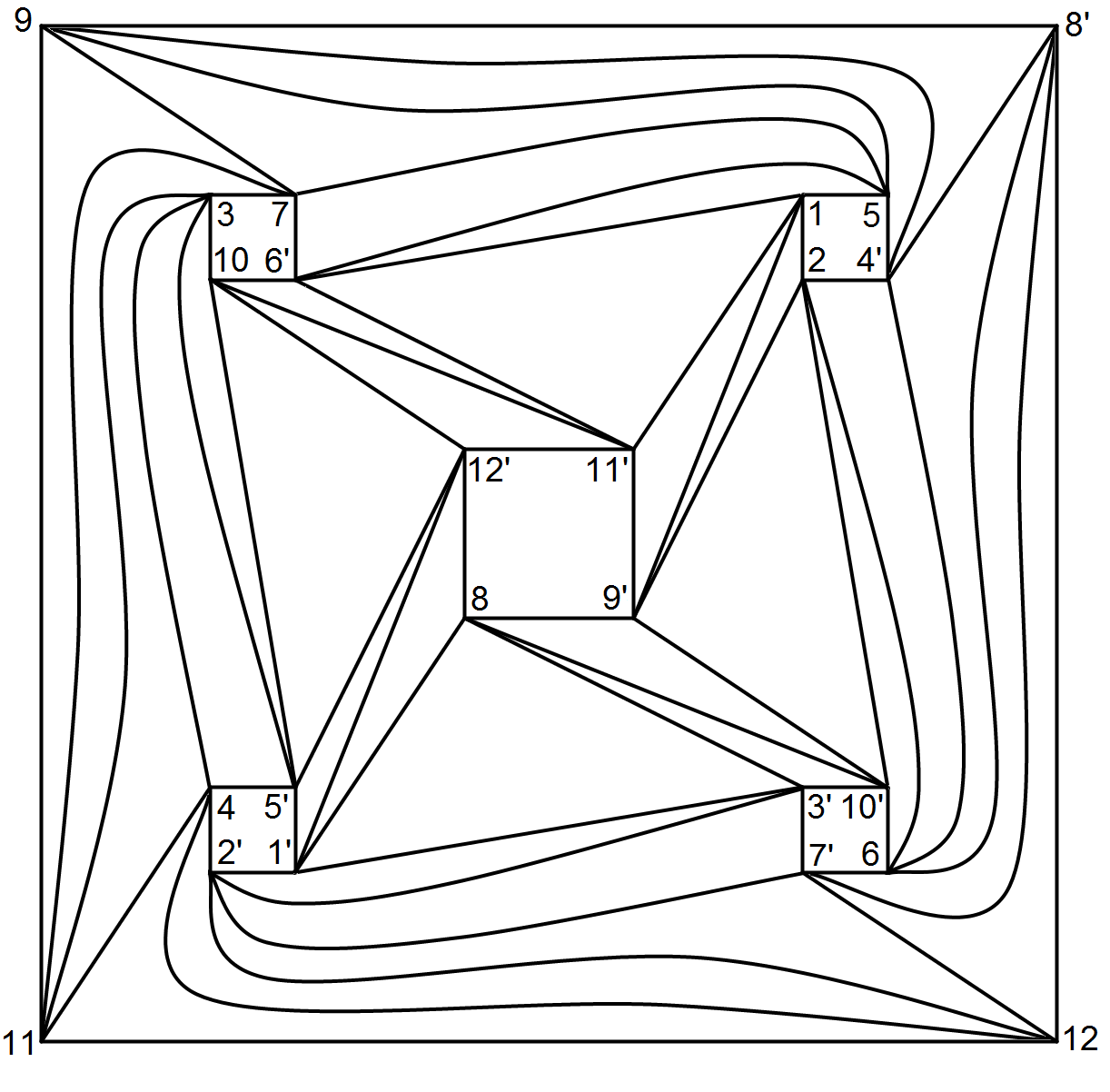}}}
    \caption{The face decomposition of the two shells of the Gr\"unbaum polyhedron.}                
    \label{diagrams}
    \end{figure}

Note that the particular vertex labeling in our diagrams is borrowed from the vertex-labeling for the polyhedral realization with
self-intersections for the Fricke-Klein map depicted in \cite[Fig.\,10]{swikp}; this realization was built from a pair of homothetic icosahedra. In particular, the permutation that pairs up the vertices $i$ and $i'$ for each $i=1,\ldots,12$, defines a central involution $\gamma$ in the automorphism group $\Gamma(P)$ of $P$; the vertices $i$ and $i'$ then are {\em antipodal\/} relative to $\gamma$. Note in particular that when we identify pairs of antipodal vertices (relative to $\gamma$) in the Gr\"unbaum polyhedron, we arrive at the polyhedral realization with self-intersections for Dyck's map described in \cite{swidy}. This observation highlights the fact that the Fricke-Klein map is a double cover of Dyck's map. Note that the Gr\"unbaum polyhedron is not (geometrically) centrally symmetric in $\mathbb{E}^3$, so antipodality relative to $\gamma$ should not be confused with antipodality relative to a point symmetry in $\E$.

\medskip
\noindent
{\bf Interesting geometric properties}
\smallskip

We first show that octahedral rotation symmetry, as exhibited by the Gr\"unbaum polyhedron, is the maximum possible symmetry for any polyhedral embedding of the Fricke-Klein map in $\mathbb{E}^3$. Here we prefer to give a direct proof although the result is implied by Theorem~\ref{verpolfin} in the next section.

Now it is clear that, because of the group order, only the full octahedral group must be excluded as a possible symmetry group. To this end, suppose a polyhedral embedding $P'$ (say) of the Fricke-Klein map has a full octahedral group as its symmetry group. Then the vertex set of $P'$ must be the disjoint union of point orbits under the full octahedral group, and these orbits must necessarily have sizes $1$, $6$, $8$, $12$, $24$, or $48$. The only two options are one orbit of size $24$, and two orbits of size $12$. In the former case there are exactly three mutually non-equivalent ways to choose a representative point for the single orbit; accordingly, the vertices of $P'$ must coincide with those of a truncated cube, truncated octahedron, or rhombi-cuboctahedron.  In the latter case, the vertices of $P'$ lie on two orbits each given by the vertex-set of a cuboctahedron in a pair of concentric cuboctahedra. On the other hand, it is not difficult to see that a polyhedral embedding of the Fricke-Klein map (with full octahedral symmetry) cannot have any of these vertex arrangements, proving that $P'$ cannot exist. Thus octahedral rotation symmetry is maximum possible. 

The Gr\"unbaum polyhedron is a {\em Leonardo polyhedron\/} in the sense of \cite{gwi}, meaning that the symmetry group of the polyhedron is either the full symmetry group of a Platonic solid or its rotation subgroup. It seems to be challenging to find Leonardo polyhedra with large automorphism groups, particularly Leonardo polyhedra which are also combinatorially regular. The Gr\"unbaum polyhedron is one of the few known examples of this kind.

The Fricke-Klein map, being regular and combinatorially centrally-symmetric (with the central symmetry defined by $\gamma$), also admits a polyhedral realization in the $2$-skeleton of the $12$-dimensional crosspolytope in Euclidean $12$-space $\mathbb{E}^{12}$ in which all combinatorial symmetries of the map are realized by geometric symmetries of the polyhedral realization in $\mathbb{E}^{12}$. (The $12$-dimensional crosspolytope is the regular convex polytope in $\mathbb{E}^{12}$ whose $24$ vertices are given by the $12$ canonical basis vectors and their negatives.) This polyhedral realization can either be constructed directly from the map data, or seen as an interesting special case of a more general theorem about realizations of combinatorially centrally symmetric abstract regular polytopes (see \cite[pp. 135-136]{arp}). Note that all the faces in this realization are equilateral triangles.

\noindent
{\bf A close relative}
\smallskip

The Gr\"unbaum polyhedron $P$ is closely related to another geometrically vertex-transitive polyhedron of type $\{3,8\}$ and genus $5$ shown in Figure~\ref{RelativeGrunbaum}. This polyhedron $Q$ (say) is not combinatorially isomorphic to $P$. It is among the five geometrically vertex-transitive polyhedra of genus $g\geq 2$ described in Gr\"unbaum \& Shephard~\cite[Fig.\,4]{gs}. These five polyhedra are Leonardo polyhedra of genus $g=3$, $5$, $7$, $11$ or $19$, whose genus is one less than the number of faces of the tetrahedron, cube, octahedron, dodecahedron or icosahedron, respectively. 

\begin{figure}[htbp] 
  \centering
    \mbox{
    \subfigure[The full polyhedron.]{
    \includegraphics[width=.4\textwidth]{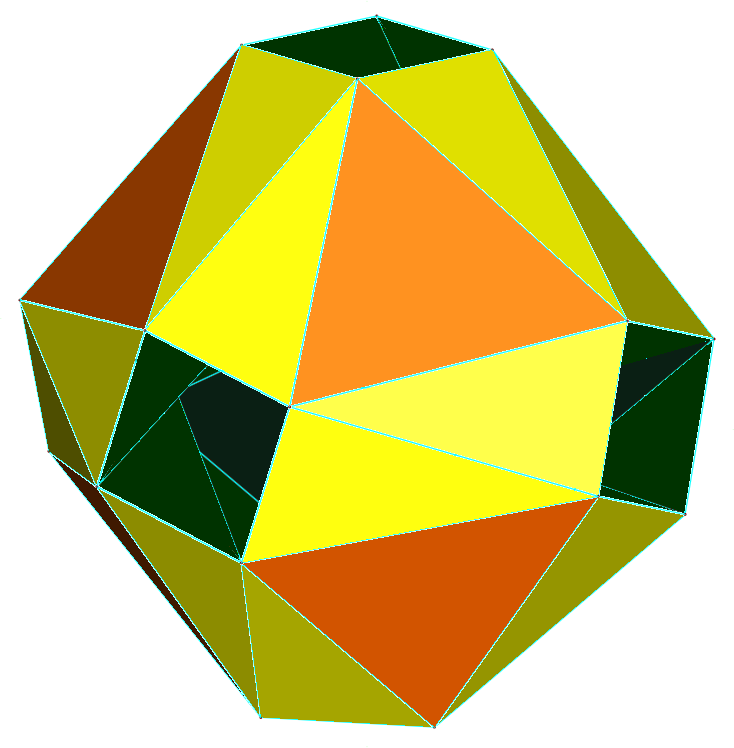}}}
    \mbox{
    \subfigure[The inner shell.]{
    \includegraphics[width=.4\textwidth]{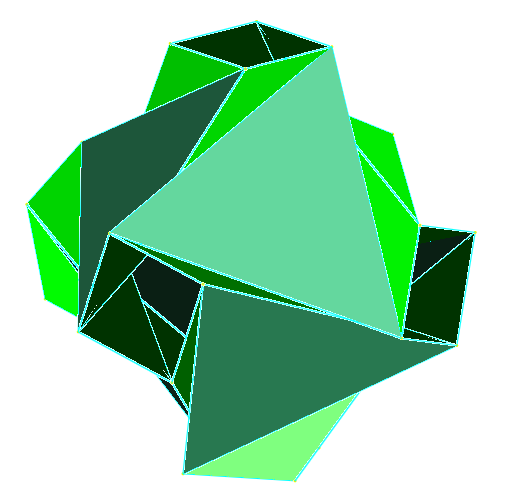}}}
    \caption{A close relative of the Gr\"unbaum polyhedron.}                
    \label{RelativeGrunbaum}
    \end{figure}
The two polyhedra $P$ and $Q$ can be constructed from each other by replacing, in the outer shells, certain $12$ pairs of adjacent triangle faces of one polyhedron by certain $12$ pairs of adjacent triangle faces of the other polyhedron, such that matching pairs share the same vertices, and form the boundary of a tetrahedron if fit together. Thus, topologically speaking, when a pair of adjacent triangle faces is viewed as a square cut in half by a diagonal on the underlying surface, then a single pair replacement is simply a switch of diagonals in this square (this switch is often called a Pachner move). In particular, $P$ and $Q$ both share the same octahedral rotation group as symmetry group, and the $12$ pairs of adjacent triangle faces in each form a single orbit under this group. It is a remarkable fact that the switch of $12$ triangle pairs does not lead to self-intersections for the newly-produced figure. Clearly, for an arbitrary polyhedron with triangle faces, switching of one or more diagonals generally  results in self-intersections. 

While the switch of face pairs on $P$ produces an aesthetically pleasing outcome, the new polyhedron $Q$ is no longer combinatorially regular. In fact, $Q$ has Petrie polygons of different length and hence cannot be combinatorially regular.

Among the other four geometrically vertex-transitive polyhedra described in~\cite{gs}, only the polyhedron of genus $11$ seems to permit similar switches of face pairs that give a new vertex-transitive polyhedron not isomorphic to the original one. Here the genus, symmetry group, and Schl\"afli type are preserved under these switches, and self-intersections can be avoided. Figure~\ref{g11} shows the original polyhedron and the new polyhedron, both of genus $11$, of type $\{3,8\}$, and with icosahedral rotation symmetry. Each occurs in two enantiomorphic forms, a right-handed and a left-handed version. Note that the polyhedra in Figure~\ref{g11} form a pair of combinatorially distinct vertex-transitive Leonardo polyhedra with the same genus, Schl\"afli type, and symmetry group. In addition to the Gr\"unbaum polyhedron and its relative, this is the only other pair of this kind known when $g\geq 2$. 

\begin{figure}[htbp] 
  \centering
    \mbox{
    \subfigure[The polyhedron of \cite{gs}.]{
    \includegraphics[width=.4\textwidth]{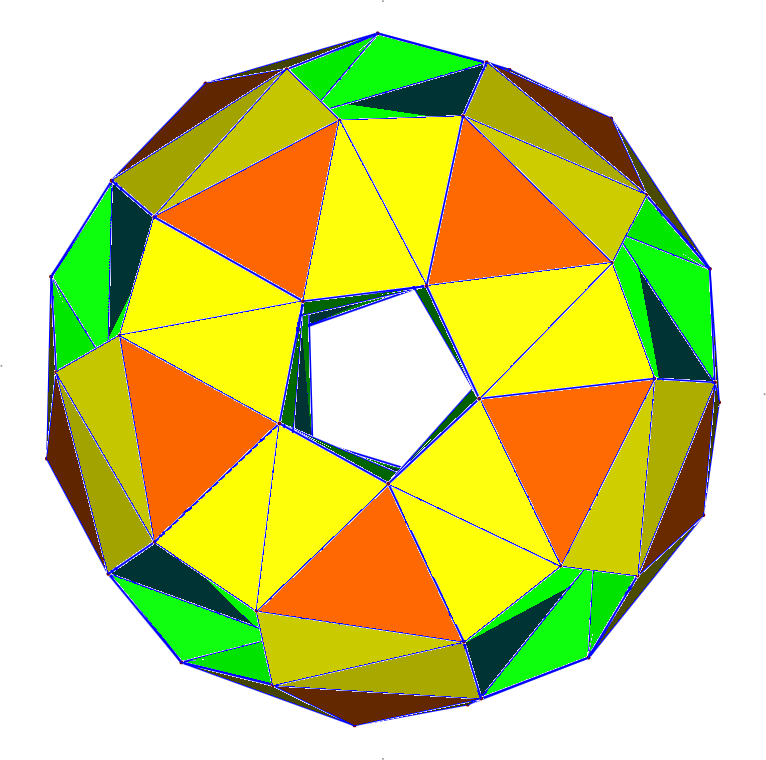}}}
    \mbox{
    \subfigure[The new polyhedron.]{
    \includegraphics[width=.4\textwidth]{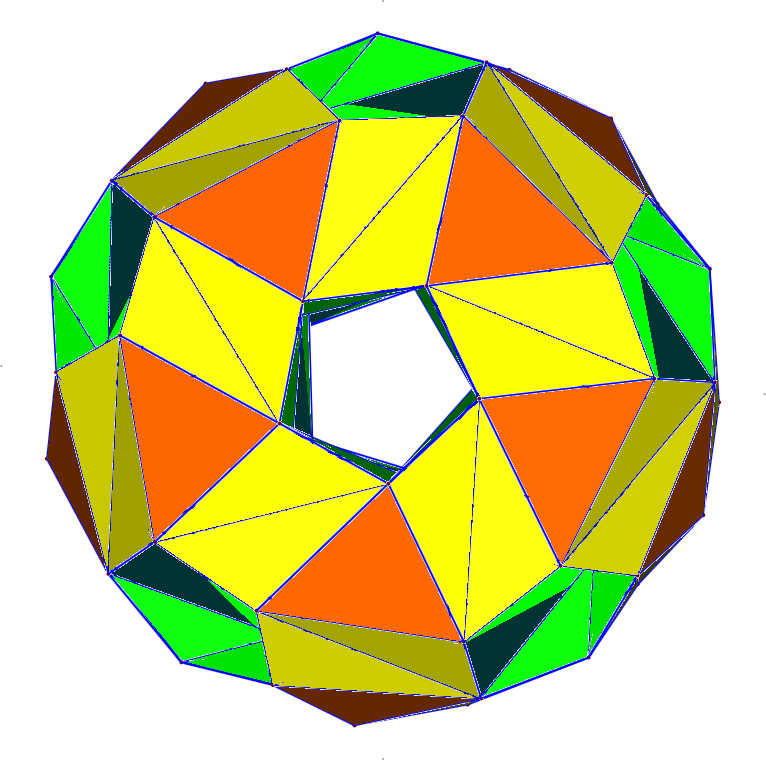}}}
    \caption{Combinatorially distinct vertex-transitive polyhedra of genus $11$ and type $\{3,8\}$.}                
    \label{g11}
    \end{figure}

\section{Vertex-transitive polyhedra}
\label{vertran}

Polyhedra with vertex-transitive symmetry groups are severely restricted in their possible geometric shapes. Gr\"unbaum and  Shephard~\cite{gs} (and independently Brehm, in unpublished work) established that there are infinitely many (isomorphism classes of) vertex-transitive polyhedra of genus $1$. However, for the genus range $g\geq 2$, only seven vertex-transitive polyhedra seem to be known. The known examples comprise the five vertex-transitive polyhedra of genus $g=3$, $5$, $7$, $11$ and $19$ discovered in \cite{gs}, as well as the Gr\"unbaum polyhedron and the polyhedron of genus $11$ shown in Figure~\ref{g11}(b). The Gr\"unbaum polyhedron is the only combinatorially regular polyhedron among these seven.

In this section we show that there are only finitely many vertex-transitive polyhedra in the genus range $g\geq 2$ (see Theorem~\ref{verpolfin} below). Note that this result concerns a whole range of genera, not any specific values of $g$. In fact, it is known that for each $g$ with $g\geq 2$ there are only finitely many vertex-transitive (polyhedral) maps on an orientable surface of genus $g$, while there are infinitely many such maps of genus $g=0$ or $1$, respectively (see \cite{kane,thom}). Thus, for each $g\geq 2$, there clearly can only be finitely many (isomorphism types of) vertex-transitive polyhedra of genus $g$; however, this does not directly translate into a corresponding statement for the entire genus range.

Clearly, the number of vertices of a geometrically vertex-transitive polyhedron is bounded by the order of its symmetry group. Hence, if we can bound the group orders for the symmetry groups from a particular class of groups, then, in effect, we have bounded the number of vertices of vertex-transitive polyhedra, and thus the total number of (isomorphism classes of) vertex-transitive polyhedra, with a symmetry group from this particular class of groups.

Now let $R$ be a vertex-transitive polyhedron with symmetry group $G(R)$. Then $G(R)$, being a finite group of isometries of $\E$, must necessarily leave a point in $\E$ invariant, and we may take this point to be the origin, $o$, of $\E$. Thus $G(R)$ is a finite (irreducible or reducible) subgroup of ${\rm O}(3)$, the orthogonal group of $\E$. This immediately limits the number of possible groups that can occur. 
\medskip

\noindent
{\bf Irreducible symmetry groups}
\medskip

An inspection of the list of the finite subgroups of ${\rm O}(3)$ shows that the symmetry groups of the Platonic solids and their rotation subgroups are the only finite irreducible subgroups of $O(3)$, except for one group; the only additional finite irreducible subgroup of $O(3)$ is the {\em pyritohedral group\/}, $U$ (say), which is isomorphic to $A_{4}\times C_2$ and obtained from the tetrahedral rotation group by adjoining the central inversion in $o$ (see \cite[Ch.\,2]{grove}). This immediately provides an upper bound for the orders of irreducible symmetry groups $G(R)$, and in particular settles the case of irreducible groups. 

Before moving on to reducible groups we note that a vertex-transitive polyhedron of positive genus with an irreducible symmetry group cannot have a plane of symmetry; that is, the full Platonic symmetry groups, as well as the exceptional group $U$, do not occur as symmetry groups of vertex-transitive polyhedra of positive genus. This is based on the following simple lemma, which later also enables us to reject the possibility of reflections in the case of reducible symmetry groups.

\begin{lem}
\label{edgecross}
Each plane of symmetry of a vertex-transitive polyhedron must be the perpendicular bisector for each edge of the polyhedron that crosses it.
\end{lem}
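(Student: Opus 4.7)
The plan is to use the reflection in the symmetry plane together with the polyhedron axiom (from Section~\ref{basno}) that any two distinct faces, and hence any two distinct edges, of $P$ intersect at most in a common vertex. Vertex-transitivity of $P$ is not directly needed for the argument itself; it is listed in the hypothesis because the lemma will be applied to vertex-transitive polyhedra. Let $H$ be the plane of symmetry in question, let $\sigma\in G(P)$ denote the reflection in $H$, and let $e$ be an edge of $P$ that crosses $H$, meaning its endpoints $u,v$ lie strictly on opposite sides of $H$. Then $e\cap H$ consists of a single point $p$ in the relative interior of $e$.

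First I would apply $\sigma$ to $e$. Since $\sigma$ is a symmetry of $P$, the image $\sigma(e)$ is again an edge of $P$, and since $\sigma$ fixes $H$ pointwise, we have $\sigma(e)\cap H=\sigma(e\cap H)=\{p\}$. In particular, $p$ is an interior point of $\sigma(e)$ as well, because $\sigma$ sends vertices of $P$ to vertices of $P$ and therefore sends interior points of $e$ to interior points of $\sigma(e)$.

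Next I would argue that $\sigma(e)=e$. If not, then $e$ and $\sigma(e)$ would be two distinct edges of $P$ sharing the point $p$, which lies in the relative interior of each of them and is not a vertex of $P$. This contradicts the defining property of a polyhedron that any two distinct edges can meet only at a common vertex. Hence $\sigma(e)=e$.

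Finally, because the endpoints $u$ and $v$ of $e$ lie on opposite sides of $H$ and $\sigma$ interchanges the two open half-spaces bounded by $H$, the only way for $\sigma$ to fix the edge $e$ setwise is to swap $u$ and $v$. This says exactly that $H$ is the perpendicular bisector of $e$, completing the proof. The main potential pitfall is subtle but minor: one must verify that $p$ is a genuine interior point of $\sigma(e)$, not a vertex, which is immediate from the fact that symmetries map vertices to vertices and that $p$ is not a vertex of $e$; once this is checked, the no-self-intersection axiom closes the argument.
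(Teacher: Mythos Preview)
Your proof is correct and follows essentially the same approach as the paper's own proof: both argue that the reflected edge must pass through the same crossing point on the plane, so the no-self-intersection property forces the edge to be invariant under the reflection, hence perpendicularly bisected. Your version is simply a more detailed unpacking of the paper's two-sentence argument, and your observation that vertex-transitivity is not actually used is accurate.
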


\begin{proof}
In fact, if an edge crosses a plane of symmetry, then its image under the plane reflection must cross the plane at the exact same point. This leads to self-intersections, unless the edge is invariant under the plane reflection.
\end{proof}

Now consider the implications of Lemma~\ref{edgecross} for vertex-transitive polyhedra of positive genus with an irreducible symmetry group. Now suppose a polyhedron $R$ of this kind has a plane of symmetry. Then either $G(R)=G(S)$ for some Platonic solid $S$, or $G(R)=U$. 

First suppose that $G(R)=G(S)$, where $S$ is a Platonic solid (with center $o$). Then each plane of symmetry of $R$ is a plane of symmetry of $S$, and vice versa. The family of all planes of symmetry of $S$ naturally subdivides $S$ into tetrahedra, each a fundamental tetrahedron for $G(S)$ in $S$. The simplicial $3$-complex $\mathcal{C}$ of all these tetrahedra is the barycentric subdivision of $S$; note that $\mathcal{C}$ can be viewed as a cone, with apex $o$, over the Coxeter complex on the boundary ${\rm bd}(S)$ of $S$ associated with the (Coxeter reflection) group $G(S)$. In particular, if $T$ is any fundamental tetrahedron, then $G(S)$ is generated by the three reflections in the walls of $T$ containing $o$. Ignoring $o$ as a trivial singleton orbit, there are (essentially) seven different kinds of point orbits in $S$ under $G(S)$; in particular, up to scaling, the vertex-set of the polyhedron $R$ must be of one of these kinds. Namely, if $T':=T\cap {\rm bd}(S)$ denotes the fundamental triangle for $G(S)$ on ${\rm bd}(S)$ determined by $T$, then, up to scaling, a representative point in $T$ of any such $G(S)$-orbit must necessarily be a vertex, a relative interior point of an edge, or a relative interior point, of $T'$.

Now, up to scaling, any vertex of $R$ must be equivalent under $G(S)=G(R)$ to a vertex $u$ of $R$ in $T'$ and hence must necessarily belong to an orbit of the kind described above. Then, by Lemma~\ref{edgecross}, the neighboring vertices $v$ of $u$ in $R$ are trapped inside the tetrahedra of $\mathcal{C}$ that do not contain $u$ but are adjacent to tetrahedra containing $u$
along a wall perpendicularly bisected by the corresponding connecting edge $\{u,v\}$ of $R$. However, this is precisely the way in which the vertices of the convex hull of the orbit of $u$ under $G(S)$ are related; note that this convex hull is just the convex hull of $R$ itself. Hence, bearing in mind that the faces of $R$ must be convex, we see that $R$ must coincide with the boundary of its own convex hull. Thus $R$ has genus $0$, contrary to our assumption.

The analysis in case $G(R)=U$ is similar. Here we may take $U$ to be generated by the reflections in the three coordinate planes 
and the $3$-fold rotations about the main space diagonals through $o$; abstractly, $U = C_{2}^3 \rtimes C_3$, a semidirect product of $C_2^3$ and $C_3$. Now if $S$ denotes the regular octahedron with vertices $(\pm 1,0,0)$, $(0,\pm 1,0)$ and $(0,0,\pm 1)$, then $U$ is a subgroup of $G(S)$ of index $2$ with fundamental tetrahedron given by $T\cup T_0$, where as above $T$ is the fundamental tetrahedron in $\mathcal{C}$ for $G(S)$, and $T_0$ is the adjacent tetrahedron in $\mathcal{C}$ meeting $T$ in the face of $T$ opposite a vertex of $S$. Then, passing to the fundamental triangle $(T\cup T_0)\cap {\rm bd}(S)$ for $U$ in ${\rm bd}(S)$, we can proceed similarly as before and trap the neighbors of a vertex in $R$ inside certain tetrahedra (copies of $T\cup T_0$ under $U$). The details are slightly more involved in this case, since $U$ contains only three plane reflections and so Lemma~\ref{edgecross} is less forceful here. Some placements of representative vertices produce larger than expected symmetry, namely full octahedral symmetry rather than confined to $U$. In any case, it turns out that $R$ must  coincide with the boundary of its own convex hull, so again $g=0$, a contradiction.

Thus the presence of a plane of symmetry in a vertex-transitive polyhedron forces it to be spherical and, in particular, form the boundary of an Archimedean solid.
\medskip

\noindent
{\bf Reducible symmetry groups}
\medskip

We now turn to reducible groups. Unlike in the irreducible case, the orders of the finite reducible subgroups of ${\rm O}(3)$ are not bounded. Our goal is to eliminate the various (large enough) reducible subgroups on geometric grounds as possible symmetry groups for vertex-transitive polyhedra of genus $g\geq 2$ (see Theorem~\ref{genus1} below). We begin by reviewing the enumeration of the finite reducible subgroups of $O(3)$. 

In the notation of \cite[Ch.\,2]{grove}, the seven different kinds of finite reducible subgroups of $O(3)$ can be described as follows. For a finite subgroup $H$ of $O^{+}(3)$, the subgroup of proper isometries of $O(3)$, we set  
\[ H^{*}:=H \cup (-i)H \;\;(= H \times \langle -i\rangle),\] 
where $i$ is the identity mapping and $-i$ is the central inversion in $o$. Moreover, if $K$ is another finite subgroup of $O^{+}(3)$ containing $H$ as a subgroup of index $2$, we define 
\[ K]H := H \cup (-i)(K\setminus H),\] 
where as usual $K\setminus H$ is the set-theoretic difference of $K$ and $H$. Then $-i$ belongs to each group of type $H^*$, but not to any group of type $K]H$. Further, let $C_n$ denote the cyclic group of order $n$ generated by a rotation by $2\pi/n$ about the $z$-axis in $\E$, and let $H_n$ denote the subgroup of $O^{+}(3)$ consisting of the rotations in $C_n$ and the half-turns about the $n$ lines of symmetry of a convex regular $n$-gon in the $xy$-plane. Then the seven kinds of finite reducible subgroups of $O(3)$ are given by 
\begin{equation}
\label{redgr}
C_n,\;C_n^*,\;C_{2n}]C_{n} \;(n\geq 1);\;\, H_n,\; H_n^*,\; H_{n}]C_{n},\; H_{2n}]H_{n}\;(n\geq 2). 
\end{equation}
The group orders, respectively, are $n$, $2n$, $2n$, as well as $2n$, $4n$, $2n$, $4n$. Each of these subgroups contains $C_n$.
 
Now suppose the symmetry group $G(R)$ of the vertex-transitive polyhedron $R$ is reducible. Then, up to conjugacy in $O(3)$, our group $G(R)$ coincides with one of the reducible  groups in (\ref{redgr}) for some $n$. Now since reducible groups of orders less than or equal to $16$ can contribute at most finitely many (isomorphisms classes of) vertex-transitive polyhedra, we can ignore small values of $n$ and assume from now that $n\geq 5$.

Next observe that the vertices of $R$ are inscribed in a sphere, since the vertex-set forms a single point orbit under $G(R)$. Moreover, any two vertices of $R$ must have the same valency, $q$ (say). No vertex of $R$ can lie in the $xy$-plane or on the $z$-axis, respectively, since otherwise these invariant subspaces would have to contain all vertices of $R$. More generally, since $R$ is $3$-dimensional, its vertex-set cannot lie in a plane, and in particular cannot consist of a single point orbit under the cyclic subgroup $C_n$ of $G(R)$. This immediately rules out the possibility that $G(R)$ coincides with $C_n$ itself or with the (standard dihedral) group $H_{n}]C_{n}$. Thus these groups cannot be symmetry groups of any vertex-transitive polyhedron. 
\medskip

\noindent
{\it The groups $C_n^*$, $C_{2n}]C_{n}$ and $H_n$.}
\medskip

If $G(R)$ is among the three groups $C_n^*$, $C_{2n}]C_{n}$ and $H_n$ (of orders $2n$), the polyhedron $R$ must necessarily have exactly $2n$ vertices forming a single orbit under $G(R)$ and lying in two parallel planes $z=\pm t$, with $t>0$, each containing $n$ points. The two sets of $n$ points are the vertex-sets of two congruent convex regular $n$-gons, each with a center on the $z$-axis and invariant under the subgroup $C_n$. These $n$-gons are referred to as the {\em top $n$-gon\/} or {\em bottom $n$-gon\/}, respectively; they become the {\em top face\/} or {\em bottom face\/}, respectively, of the convex hull ${\rm conv}(R)$ of $R$. We also call a vertex of $R$ in the plane $z=t$ or $z=-t$, respectively, a {\em top vertex\/} or {\em bottom vertex\/}; each vertex of $R$ is of one of these kinds. We also use similar terminology for ${\rm conv}(R)$.

In most cases ${\rm conv}(R)$ is combinatorially an $n$-gonal antiprism; in some particular cases the top and bottom faces of ${\rm conv}(R)$ are polars of each other (up to translation), so then ${\rm conv}(R)$ is a standard antiprism. In all other cases ${\rm conv}(R)$ is an $n$-gonal right prism.

Now since the $2n$ vertices of $R$ lie on a sphere and are symmetrically positioned on a pair of parallel planes, it is immediately clear that, with possibly two exceptions, the faces of $R$ must be triangles or (convex) quadrangles; an exception occurs precisely when the top or bottom $n$-gon is a face of $R$. Moreover, an edge of $R$ joining two top vertices or two bottom vertices, must necessarily be an edge of the top or bottom $n$-gon, respectively; that is, an edge of $R$ cannot be a diagonal of the top or bottom $n$-gon. In fact, otherwise $R$ would have to have self-intersections, by the invariance of the top and bottom $n$-gons under the subgroup $C_n$ of $G(R)$. Thus an edge of $R$ must either be an edge of the top $n$-gon or the bottom $n$-gon, or pass from a top vertex to a bottom vertex. If an edge of the former kind occurs in $R$, then in fact all edges of the top $n$-gon and all edges of the bottom $n$-gon must occur as edges of~$R$, once again by the invariance of $R$ under $C_n$ and $G(R)$. 

Moreover, since a diagonal of the top or bottom $n$-gon cannot occur as an edge of $R$, every quadrangular face (if any) of $R$ must necessarily share one edge with the top $n$-gon and one edge with the bottom $n$-gon; these edges must necessarily be parallel, by the planarity of the quadrangular face. Now since no face of $R$ can pass through $o$ (otherwise $R$ would self-intersect at $o$), this then leaves only one possibility for quadrangular faces to occur, namely as the rectangular faces of the mantle of ${\rm conv}(R)$ when ${\rm conv}(R)$ is an $n$-gonal right prism. In this case exactly two (adjacent) rectangular faces meet at each vertex of $R$. Since $q\geq 3$, it also follows that $R$ must necessarily have triangular faces, unless $R$ itself is the boundary of an $n$-gonal right prism.

Now if $F$ is a triangular face of $R$, then $F$ has exactly two vertices in common with either the top $n$-gon or the bottom $n$-gon; also, the edge of $F$ connecting them must be an edge, not a diagonal, of this $n$-gon. It follows that each triangular face of $R$ must have an edge in common with the top $n$-gon or the bottom $n$-gon. Moreover, each edge of the top $n$-gon or bottom $n$-gon must occur as an edge of a triangular face of $R$, again by the invariance of the polyhedron under $C_n$ and $G(R)$. 

Now suppose $R$ has only triangular or quadrangular faces. Then we prove that $R$ must be a toroidal polyhedron, that is, $g=1$. Here we exploit the fact that the edge boundaries of the top and bottom $n$-gons serve as connectors for two parts of $R$, the ``inner shell" and the ``outer shell". Now suppose $u$ is a top vertex of $R$, and $v_1(u),\ldots,v_q(u)$, in this order, are  the vertices of $R$ adjacent to $u$. For $j=1,\ldots,q$, let $F_j(u)$ be the face of $R$ containing the vertices $v_j(u)$, $u$, $v_{j+1}(u)$ (and possibly a fourth vertex), with indices considered modulo $q$. Then exactly two vertices, $v_1(u)$ and $v_{1+k(u)}(u)$ (say), from among $v_1(u),\ldots,v_q(u)$ are also top vertices while all others are bottom vertices. We may choose our vertex labeling in such a way that each face from among $F_{1}(u),\ldots,F_{k(u)}(u)$ is a triangle, and that quadrangular faces of $R$ (if any) must occur among the faces $F_{1+k(u)}(u),\ldots,F_{q}(u)$. Now since $R$ is invariant under the cyclic subgroup $C_n$, we may further assume that the face labels associated with any two adjacent top vertices $u$ and $u'$ are consistent, in the sense that $k(u)=k(u')\geq 3$ and $F_{k(u)}(u) = F_{1}(u')$ (so $\{u,u'\}$ is an edge of either face). Thus $k:=k(u)$ is independent of $u$. Moreover, $k=3$; otherwise certain edges of $R$ would have to lie in more than two faces of $R$, namely the edges in the bottom $n$-gon if $k\geq 5$, or the edges of the form $F_{2}(u)\cap F_{3}(u)$ if $k=4$.

With these assumptions on the face labels in place, it then follows that the polyhedral subcomplex of $R$ made up of all faces $F_{1}(u),\ldots,F_{k(u)}(u)$, with $u$ a top vertex, is topologically a cylinder invariant under $C_n$ and bounded at the top and bottom, respectively, by the edge boundaries of the top and bottom $n$-gons; combinatorially, this subcomplex is the face subdivision of the mantle of an $n$-gonal antiprism. Similarly, the (closure of the) complement of this subcomplex in $R$ is another polyhedral subcomplex of $R$, again topologically a cylinder invariant under $C_n$; this second cyclinder shares the same boundary with the first cylinder, and its underlying subcomplex is isomorphic to the face subdivision of the mantle of an $n$-gonal antiprism or prism. These two subcomplexes form the two ``shells" of $R$, with the qualification ``inner" and ``outer" determined by our choice of labeling. Notice that each shell is also $G(R)$-invariant, not only $C_n$-invariant; in particular, the two shells cannot be interchanged by a symmetry in $G(R)$ outside of $C_n$. The invariance under $G(R)$ also shows that alternatively we can think of each shell as being obtained by a process parametrized by the bottom (rather than top) vertices $u$ of $R$, where as before certain faces from the vertex-stars are chosen and assembled to form topologically a cylinder. 

In summary, if $R$ has only triangular or quadrangular faces, then we have one of two possible scenarios. If $R$ has no quadrangular faces, then each shell is a suitably ``twisted" mantle of an $n$-gonal antiprism fitting together along the edge boundaries of the top and bottom $n$-gon to form a triangle-faced toroidal polyhedral; in particular, $R$ is equivelar of type $\{3,6\}$. If $R$ does have quadrangular faces, then the interior shell is again a suitably twisted mantle of an $n$-gonal antiprism, and the outer shell is the mantle of an $n$-gonal right prism. Again $R$ is a toroidal polyhedron, now with five faces at each vertex, namely three triangles and two rectangles. In either case $R$ is one of the toroidal vertex-transitive polyhedra described in \cite{gs}.

It remains to consider the case that $R$ has faces which are not triangles or quadrangles. Then there are exactly two such faces,  namely the top $n$-gon and the bottom $n$-gon; note here that the invariance under $G(R)$ forces both $n$-gons to occur. Leaving these two $n$-gonal faces aside for a moment (and bearing in mind that $n\geq 5$), we can proceed in much the same way as before and establish that the remaining faces of $R$ form a $G(R)$-invariant subcomplex which is topologically a cylinder bounded at the top and bottom by the edge boundaries of the two $n$-gonal faces of $R$. Depending on whether $R$ has quadrangular faces or not, this subcomplex is the mantle of an $n$-gonal right prism or a suitably twisted mantle of an $n$-gonal antiprism. In either case $R$ is a spherical polyhedron, with $g=0$. If $R$ has quadrangular faces, then all faces are rectangles or convex regular $n$-gons; all vertices are $3$-valent; and $R$ is the boundary of a $n$-gonal right prism. If $R$ has no quadrangular faces, then all faces are triangles or convex regular $n$-gons; all vertices are $5$-valent; and $R$ bounds a nonconvex solid, namely a suitably ``twisted" $n$-gonal antiprism.

We remark that not all the groups $C_n^*$, $C_{2n}]C_{n}$, and $H_n$ investigated here do actually occur as symmetry groups of vertex-transitive polyhedra of positive genus, that is, of genus~$1$. Apart from the fact than $n\geq 7$ is required for the construction of a twisted mantle of an $n$-gonal antiprism as described above, there are more severe restrictions arising from the presence of a horizontal plane of symmetry. For example, all groups $C_n^*$ with $n$ even and $C_{2n}]C_{n}$ with $n$ odd contain the reflection in the $xy$-plane as an element; but clearly, since self-intersections are not permitted, none of the above toroidal polyhedra admits a horizontal plane of symmetry. Moreover, none of the groups $H_n$ can occur as the symmetry group of a toroidal polyhedron. In fact, each half-turn in $H_n$ about a horizontal axis must necessarily intersect a twisted antiprismatic mantle for a polyhedron in an edge, forcing the two faces of the mantle meeting there to be coplanar; by the $C_n$ invariance, the polyhedron would have to have self-intersections. Thus the groups $C_n^*$ with $n$ odd and $C_{2n}]C_{n}$ with $n$ even are the only groups that remain, and these do actually occur; in fact, the parity of $n$ determines the group uniquely.
\medskip

\noindent
{\it The groups $H_n^*$ and $H_{2n}]H_{n}$.}
\medskip

We can reject the groups $H_n^*$ and $H_{2n}]H_{n}$ on the grounds that they contain plane reflections. In fact, if $s$ is a half-turn in $H_n$ or $H_{2n}$, then $r:=(-i)s$ is a reflection in the plane perpendicular to the rotation axis of $s$. For the groups $H_n^*$ this gives us $n$ reflections from the half-turns $s$ with rotation axis contained in the $xy$-plane, as well as the reflection in the $xy$-plane itself if $n$ is even and $s$ is the half-turn about the $z$-axis. Similarly, we obtain $n$ reflections from the half-turns $s$ in $H_{2n}\!\setminus\! H_n$ with rotation axis contained in the $xy$-plane, as well as the reflection in the $xy$-plane itself if $n$ is odd and $s$ is the half-turn (in $H_{2n}^*$) about the $z$-axis. In either case there are no other plane reflections in the group. Moreover, the $n$ reflection planes derived from the half-turns about lines in the $xy$-plane are perpendicular to the $xy$-plane and dissect $\E$ into $2n$ congruent angular regions, each bounded by two planes through the $z$-axis inclined by $\pi/n$; the intersections of the $n$ reflection planes with the $xy$-plane form the standard reflection line arrangement of a convex regular $n$-gon in the $xy$-plane. If the reflection in the $xy$-plane also belongs to the group under consideration, then this additional reflection plane further cuts the $n$ regions in space in half. 

Now suppose $R$ is a vertex-transitive polyhedron with symmetry group $G(R)$ given by $H_n^*$ and $H_{2n}]H_{n}$. Then $R$ has $2n$ or $4n$ vertices arranged in two sets of equal size in two parallel planes $z=\pm t$, with $t>0$; recall here that $C_n$ is a subgroup of $G(R)$ and has planar point orbits of size $n$. The number of vertices of $R$ is $2n$ if and only if at least one (and hence each) vertex of $R$ lies on one of the $n$ non-horizontal reflection planes. As before we use the same terminology of top and bottom vertices. Note also that the faces of $R$ can only be triangles and quadrangles, for the same reason as before.

We now exploit Lemma~\ref{edgecross}. First suppose that each vertex of $R$ lies on a non-horizontal reflection plane, so $R$ has $2n$ vertices. Then ${\rm conv}(R)$ is an $n$-gonal right prism. In this case some of the analysis for the previously discussed groups carries over. In particular, each vertex of $R$ has the same valency, $q$; no diagonal of the top $n$-gon can occur as an edge of $R$; and if $R$ has any quadrangular faces, then they must coincide with the mantle faces of the $n$-gonal prism. On the other hand, bearing in mind the crossing behavior of edges and reflection planes, we see that a top vertex of $R$ can only be joined to just one bottom vertex, namely the vertex directly below it. Thus $q=3$. Moreover, since all vertices of $R$ are $3$-valent, $R$ must actually bound a convex polytope and have genus $0$.

Now suppose no vertex of $R$ lies on a non-horizontal reflection plane. Then $R$ has $4n$ vertices, such that each of the $2n$ angular segments in the top plane and bottom plane contains exactly one vertex. Again, since crossing edges must be perpendicular to reflection plane, each top vertex of $R$ can only be joined to just one bottom vertex, namely the vertex in the angular segment of the bottom plane directly below it. Hence each top vertex must be joined to $q-1\geq 2$ other top vertices, where again $q$ denotes the valency of the vertices in $R$. We claim that self-intersections must occur if $q\geq 4$. First note that the $2n$ top vertices form a single orbit under the standard dihedral group of order $2n$ generated by the reflections in the $n$ non-horizontal mirrors; hence they are the vertices of a (generally non-regular) convex $2n$-gon in the top plane. Now, if $q-1\geq 3$, then any top vertex $u$ must be joined to another top vertex $v$ by an edge $e$ of $R$ which is not an edge of the top $2n$-gon. Suppose $u'$ is the top vertex which, as a vertex of the $2n$-gon, is adjacent to $u$ and lies on the (combinatorially) shorter boundary arc of the $2n$-gon connecting $u$ and $v$ (if $v$ is diametrically opposite to $u$ in the $n$-gon, we choose either of the two possible arcs). Then the perpendicular bisector of $u$ and $u'$ is a reflection plane for the group and the corresponding reflection takes $e$ to another edge intersecting $e$ non-trivially. Thus again $q=3$, and $R$ must bound  a convex polyhedron and have genus $0$.
\medskip

\noindent
{\bf Conclusions}
\medskip

In summary, we established the following two theorems.

\begin{thm}
\label{verpolfin}
There are only finitely many vertex-transitive polyhedra in the genus range $g\geq 2$. The symmetry group of each vertex-transitive polyhedron in the genus range $g\geq 2$ is a Platonic rotation group.
\end{thm}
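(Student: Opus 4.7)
The plan is to assemble the two theorems directly from the case analysis already carried out in the preceding subsections, since almost all of the substantive work has been done there. My first step is to record the setup: if $R$ is a vertex-transitive polyhedron of genus $g\geq 2$, then $G(R)$ fixes the centroid $o$ of the vertex-set, so $G(R)$ is a finite subgroup of ${\rm O}(3)$. Since the number of vertices of $R$ is bounded above by $|G(R)|$ (by vertex-transitivity), and since for each admissible group there are at most finitely many isomorphism classes of vertex-transitive polyhedra with that symmetry group in any fixed genus, it suffices to identify which groups can actually arise for $g\geq 2$ and to check that the collection of such groups is finite with bounded orders.

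Next I would split into the irreducible and reducible cases. For an irreducible $G(R)$, the classification of finite irreducible subgroups of ${\rm O}(3)$ leaves only the five Platonic rotation groups, the five full Platonic symmetry groups, and the pyritohedral group $U$. The analyses already carried out show that if $G(R)$ equals a full Platonic group $G(S)$ or equals $U$, then Lemma~\ref{edgecross} forces each vertex of $R$ to be joined only to vertices in its ``expected'' neighboring fundamental tetrahedra, so that $R$ must coincide with the boundary of its convex hull and hence have genus $0$, contradicting $g\geq 2$. Therefore only the five Platonic rotation groups remain, establishing the second assertion of the theorem in the irreducible case and contributing only finitely many polyhedra.

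For reducible $G(R)$, the groups are those in (\ref{redgr}); small values of $n$ contribute only finitely many polyhedra, so I fix $n\geq 5$. The preceding subsections already show: $C_n$ and $H_n]C_n$ cannot arise since their orbits lie in a plane; each of $C_n^\ast$, $C_{2n}]C_n$, $H_n$ forces ${\rm conv}(R)$ to be an $n$-gonal prism or antiprism and a careful face-count yields either a spherical polyhedron or one of the toroidal Gr\"unbaum--Shephard examples with $g=1$; and the groups $H_n^\ast$, $H_{2n}]H_n$ contain non-horizontal reflections that, combined with Lemma~\ref{edgecross}, force every vertex to have valency $3$ and $R$ to bound a convex solid (hence $g=0$). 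In every case the genus is forced to be $0$ or $1$, contradicting $g\geq 2$, so no reducible group occurs as the symmetry group of a vertex-transitive polyhedron of genus $g\geq 2$.

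Combining the two cases yields the second sentence of the theorem: $G(R)$ must be one of the five Platonic rotation groups. Since there are only five such groups and each has bounded order, and since the number of vertices of $R$ is at most this order, the first sentence follows as well. The only step that requires any care beyond quoting the earlier case analysis is ensuring I have captured all small-$n$ reducible cases; I would handle this by noting explicitly that reducible groups of order at most some fixed bound (say $16$, matching the threshold used earlier) can only contribute finitely many polyhedra in total and can be absorbed into the finite list without affecting the group-theoretic conclusion. The main conceptual obstacle --- showing each case reduces to $g\leq 1$ --- has already been dispatched, so the proof itself will be short.
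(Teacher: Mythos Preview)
Your proposal is correct and matches the paper's approach exactly: the paper presents Theorem~\ref{verpolfin} (together with Theorem~\ref{genus1}) simply as a summary of the case analysis already carried out in the preceding subsections, with no separate proof given. One minor slip worth fixing: there are three distinct Platonic rotation groups (tetrahedral, octahedral, icosahedral), not five, since dual Platonic solids share the same symmetry group; this is cosmetic and does not affect your argument.
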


\begin{thm}
\label{genus1}
A vertex-transitive polyhedron with a reducible symmetry group must have genus $0$ or $1$. There are infinitely many vertex-transitive polyhedra of genus $0$, as well as of genus $1$. 
\end{thm}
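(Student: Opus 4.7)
The plan is to assemble the first assertion directly from the lengthy case analysis carried out just above the theorem statement. I would organize the seven kinds of finite reducible subgroups of $\mathrm{O}(3)$ listed in (\ref{redgr}) into three buckets according to what the analysis concluded about them. The groups $C_n$ and $H_n]C_n$ are ruled out as symmetry groups of any vertex-transitive polyhedron because they would force the vertex set of $R$ to be planar (a single $C_n$-orbit). The groups $C_n^*$, $C_{2n}]C_{n}$, and $H_n$ either lead to spherical polyhedra (prisms, antiprisms, twisted antiprisms viewed as nonconvex spherical solids) or to toroidal polyhedra built from two antiprismatic (or one antiprismatic and one prismatic) shells joined along the top and bottom $n$-gonal boundaries. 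Finally, the groups $H_n^*$ and $H_{2n}]H_{n}$ contain enough plane reflections that Lemma~\ref{edgecross}, applied to crossing edges, forces every vertex to be $3$-valent, so $R$ bounds a convex polytope and $g=0$. Small values of $n$ contribute only finitely many examples and do not affect genus. Thus whenever $G(R)$ is reducible, $g\in\{0,1\}$, which is the first assertion.

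For the second assertion I would exhibit two explicit infinite families. For genus $0$, the boundaries of the regular $n$-gonal right prisms, one for each $n\geq 3$, are pairwise non-isomorphic vertex-transitive polyhedra with $2n$ vertices; alternatively one could use the $n$-gonal antiprisms. For genus $1$, the preceding analysis itself produced the relevant construction: for each sufficiently large $n$ one can fit a suitably twisted antiprismatic inner shell and a second twisted antiprismatic (or prismatic) outer shell along the edge boundaries of the top and bottom $n$-gons to form a toroidal vertex-transitive polyhedron. As shown in the discussion immediately before the theorem, these realizations actually occur (with symmetry groups $C_n^*$ for $n$ odd and $C_{2n}]C_{n}$ for $n$ even), and this is also the family treated in Gr\"unbaum--Shephard \cite{gs}. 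Since the vertex count $2n$ grows without bound, these polyhedra are pairwise non-isomorphic.

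The main obstacle is not the genus dichotomy, which essentially just records the case analysis, but rather the assertion that genus $1$ is actually achieved infinitely often: one must know that the twisted antiprismatic shells can be fitted together to form a surface without self-intersections for arbitrarily large $n$. I would handle this by appealing to the explicit construction in \cite{gs} and to the detailed verification carried out in the preceding text (in particular the parity-based restrictions on $n$ arising from the absence of a horizontal plane of symmetry), rather than re-deriving the geometry from scratch.
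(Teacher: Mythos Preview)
Your proposal is correct and follows essentially the same route as the paper: the theorem is stated there as a direct summary of the preceding case analysis of the seven reducible subgroup types in (\ref{redgr}), with the infinitude of genus~$0$ and genus~$1$ examples supplied respectively by prisms/antiprisms and by the toroidal families of Gr\"unbaum--Shephard~\cite{gs}. One small point worth tightening is your remark that small values of $n$ ``do not affect genus'': the finiteness of such cases is relevant only for Theorem~\ref{verpolfin}, whereas for the present theorem you still need the geometric arguments (two parallel vertex layers, edge-crossing restrictions) to go through when $n<5$, which they do with only trivial modifications---the paper itself glosses over this in the same way.
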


\end{document}